\tikzset{edgee/.style = {->,> = latex'}}
\newcolumntype{P}[1]{>{\centering\arraybackslash}p{#1}}
\newcolumntype{M}[1]{>{\centering\arraybackslash}m{#1}}
\title{\textbf{Symmetric Union Closed Families }}
\author{Nived J M}
\date{} %
\affil{\footnotesize M.Sc Mathematics, 
Indian Institute of Technology Hyderabad, India\\
Email: nivedjm.res@gmail.com
}
\begin{document}

\maketitle

\begin{abstract}
    We demonstrate that when a graph exhibits a specific type of symmetry, it satisfies the Union Closed Conjecture(UCC). Additionally, we show that certain graph classes, such as Cylindrical Grid Graphs and Torus Grid Graphs also satisfy the conjecture. We prove the known result that the union closed family generated by cyclic translates of a fixed set satisfies the UCC, offering a simpler proof via symmetry arguments. Later, we show that the union closed family generated by the family obtained through cyclically shifting elements from selected translates also satisfies the conjecture.
   \end{abstract}

\textbf{{Keywords:}} Union Closed Conjecture, Symmetry, Family of sets, Member sets, Translates.

\section{Introduction}\label{sec1}
The Union Closed Conjecture, also known as Frankl's Conjecture, was proposed by Peter Frankl in 1979. A family of sets is said to be \textit{union closed} if the union of any two of its member sets is also a member of the family. The conjecture asserts that for any finite union closed family of sets, excluding the family consisting the empty set alone, there exists an element that is present in at least half of the sets in the family. This element is referred to as an \textit{abundant} element. The set of all elements of the family $\mathcal{F}$ is the universe of $\mathcal{F}$, denoted by $\textit{U}(\mathcal{F})$.

 The Union Closed Conjecture has been proven in specific cases, such as when \( |\mathcal{F}| \geq \frac{2}{3} 2^{|U(\mathcal{F})| }\) \cite{2/3}, \( |\mathcal{F}|\leq 2|U(\mathcal{F})| \) \cite{2m}, and for smaller sets like \( |U(\mathcal{F})| \leq 12 \) \cite{12el} or \( |\mathcal{F}| \leq 50 \) \cite{50b,50c}. More recently Gilmer\cite{gil} made significant progress on the conjecture by providing the first constant lower bound for the conjecture. Despite these advancements, the conjecture remains open with various cases still unsolved.

The conjecture has been explored in multiple contexts, one of which is the \textit{graph formulation}\cite{11}. A vertex subset is called a \textit{maximal stable set} if no two vertices in the subset are adjacent and no additional vertex can be added to it without violating this condition. The graph formulation of the Union Closed Conjecture states that in any bipartite graph with at least one edge, there exists a vertex in each bipartite class that lies in at most half of the maximal stable sets. This vertex is known as \textit{rare}. 

We now turn our attention to the union closed family generated by a family $\mathcal{F}$, denoted by $\langle \mathcal{F} \rangle$, which consists of all possible unions of the sets in $\mathcal{F}$, including the empty set $\emptyset$. The \textit{incidence graph} of a family $\mathcal{F}$ is a bipartite graph $G$ with vertex set $V(G) = \mathcal{F} \cup U(\mathcal{F})$, and edge set $E(G) = \{Sx : S \in \mathcal{F}, x \in U(\mathcal{F}), x \in S \}$. For a bipartite graph $G$ with bipartite classes $X$ and $Y$, the \textit{incidence family} of $X$ is defined as $\mathcal{F}^{\scriptscriptstyle X} = \{N(y) : y \in Y\}$, where $N(y)$ is the set of neighbors of vertex $y$. The following proposition connects the concept of \textit{rare} vertices in the graph formulation to the concept of \textit{abundant} elements in the union closed family:

\begin{proposition}\label{prop31}\cite{me}
    Let $G$ be a bipartite graph with vertex partition $X \cup Y$. A vertex $x \in X$ is \textit{rare} if and only if it is \textit{abundant} in $\langle \mathcal{F}^{\scriptscriptstyle X} \rangle$.
\end{proposition}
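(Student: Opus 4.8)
The plan is to set up an explicit, membership-reversing bijection between the maximal stable sets of $G$ and the members of $\langle \mathcal{F}^{\scriptscriptstyle X}\rangle$, and then read off the equivalence by a counting argument. The starting observation is that every member of $\langle \mathcal{F}^{\scriptscriptstyle X}\rangle$ is of the form $\bigcup_{y\in T} N(y)$ for some $T\subseteq Y$ (with $T=\emptyset$ giving $\emptyset$), and consequently each such set $C$ satisfies the closure identity $C=\bigcup\{N(y): N(y)\subseteq C\}$, since every $N(y)$ appearing in the union is contained in $C$ and every $x\in C$ lies in one of them.

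I would define the map $\Phi$ sending a maximal stable set $S$ to $X\setminus(S\cap X)$, with candidate inverse $\Psi$ sending $C\in\langle\mathcal{F}^{\scriptscriptstyle X}\rangle$ to $(X\setminus C)\cup\{y\in Y: N(y)\subseteq C\}$. First I would show $\Phi$ is well defined: writing $A=S\cap X$, stability forces $S\cap Y\subseteq\{y:N(y)\cap A=\emptyset\}$, and maximality forces equality, so $S$ is determined by $A$; moreover maximality on the $X$-side says every $x\notin A$ has a neighbour $y$ with $N(y)\cap A=\emptyset$, i.e.\ $N(y)\subseteq X\setminus A$, whence $X\setminus A=\bigcup\{N(y):N(y)\subseteq X\setminus A\}\in\langle\mathcal{F}^{\scriptscriptstyle X}\rangle$. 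Conversely I would check that $\Psi(C)$ is a maximal stable set: there are no edges between $X\setminus C$ and $\{y:N(y)\subseteq C\}$ by definition, the closure identity supplies a witness neighbour for each $x\in C$, and any $y$ with $N(y)\not\subseteq C$ meets $X\setminus C$. Verifying that $\Phi$ and $\Psi$ are mutually inverse is then routine.

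With the bijection in hand, the decisive point is that $x\in X$ lies in a maximal stable set $S$ exactly when $x\notin\Phi(S)$, so membership of $x$ is flipped by the correspondence. Letting $m$ denote the common number of maximal stable sets and of members of $\langle\mathcal{F}^{\scriptscriptstyle X}\rangle$, the number of maximal stable sets containing $x$ equals $m$ minus the number of members of $\langle\mathcal{F}^{\scriptscriptstyle X}\rangle$ containing $x$; hence the former is at most $m/2$ if and only if the latter is at least $m/2$, which is precisely the statement that $x$ is \emph{rare} if and only if it is \emph{abundant}. I expect the main obstacle to be the well-definedness and surjectivity of $\Phi$, namely proving that the $X$-parts of maximal stable sets are exactly the complements in $X$ of members of $\langle\mathcal{F}^{\scriptscriptstyle X}\rangle$; this is where the equivalence between maximality of the stable set and the union-closure (closure identity) of the corresponding set must be pinned down carefully, while the final counting step is immediate.
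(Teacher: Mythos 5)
Your argument is correct: the complementation bijection $S\mapsto X\setminus(S\cap X)$ between maximal stable sets and members of $\langle\mathcal{F}^{\scriptscriptstyle X}\rangle$, together with the membership-flipping count, is exactly the standard proof of this equivalence, which the paper itself does not reproduce but only cites. All the verifications you flag (well-definedness, surjectivity via the closure identity, and the case $T=\emptyset$ giving the empty set) go through as you describe, so nothing further is needed.
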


In Section \ref{sec2}, we discuss how symmetry plays a role in the Union Closed Conjecture and provide a proof of the conjecture for Cylindrical and Torus Grid Graphs. These graphs are referenced in works such as \cite{to}, \cite{to1}, and \cite{to2}. In Section \ref{sec3}, we present a different proof for the UCC for families generated by translates of a fixed set\cite{da}. Moreover, we extend this result by showing that the conjecture also holds for families obtained by selecting certain sets within the family of translates of a fixed set, choosing an element from one of these sets, and applying cyclic position shifts to its translations across the selected sets under some conditions.

\section{Symmetry in UCC}\label{sec2}

To explore the UCC for graphs with specific symmetries, we introduce a theorem. For a graph $G$, a function $f \colon V(G) \to V(G)$ is called an automorphism if it is bijective and preserves the edge structure of the graph. Specifically, this means $(v_1, v_2) \in E(G)$ if and only if $(f(v_1), f(v_2)) \in E(G)$.

\begin{theorem}\label{thm1}
    Let $G$ be a bipartite graph with bipartition classes $X$ and $Y$. If there exists an automorphism $f$ on $G$ such that $f(X) = Y$ and $f(Y) = X$, then $G$ satisfies UCC.
\end{theorem}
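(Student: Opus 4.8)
The plan is to argue directly with the graph formulation. Write $\mathcal{M}$ for the collection of all maximal stable sets of $G$, and for a vertex $v$ let $d(v)$ denote the number of members of $\mathcal{M}$ that contain $v$; the goal is to exhibit, in each of $X$ and $Y$, a vertex $w$ with $d(w) \le |\mathcal{M}|/2$. Since $G$ has at least one edge, I would fix an edge $uv$ with $u \in X$ and $v \in Y$. The one substantive observation is that no stable set contains two adjacent vertices, so no maximal stable set contains both $u$ and $v$; hence $\{S \in \mathcal{M} : u \in S\}$ and $\{S \in \mathcal{M} : v \in S\}$ are disjoint subfamilies of $\mathcal{M}$. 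This forces $d(u) + d(v) \le |\mathcal{M}|$, and therefore $\min\{d(u), d(v)\} \le |\mathcal{M}|/2$. In other words, one endpoint of the edge is already rare.

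The second step is where the automorphism enters. Because $f$ preserves adjacency and non-adjacency in both directions, it carries stable sets to stable sets and maximal ones to maximal ones, so $S \mapsto f(S)$ is a bijection of $\mathcal{M}$ onto itself. Moreover $w \in S$ if and only if $f(w) \in f(S)$, so this bijection restricts to a bijection between the maximal stable sets containing a vertex $w$ and those containing $f(w)$; consequently $d(w) = d(f(w))$ for every $w$. Applying this to the rare endpoint found above, say $u \in X$, the vertex $f(u)$ lies in $Y$ (as $f(X) = Y$) and satisfies $d(f(u)) = d(u) \le |\mathcal{M}|/2$. Thus $u$ is a rare vertex in $X$ and $f(u)$ is a rare vertex in $Y$, and symmetrically if the rare endpoint happened to be $v \in Y$. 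This is exactly the assertion that $G$ satisfies UCC.

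I expect the genuinely substantive content to be the single-edge pigeonhole; the symmetry is then used only to transport a rare vertex from whichever class it was located in to the opposite class, which is precisely what upgrades ``rare somewhere'' to ``rare in each class.'' The step that most needs care is the claim that $f$ acts on $\mathcal{M}$ and preserves the counts $d(\cdot)$: one must check that both $f$ and $f^{-1}$ preserve the edge relation, so that independence and maximality are genuinely preserved, and that $f$ being a bijection of $V(G)$ makes $S \mapsto f(S)$ a bijection of $\mathcal{M}$ rather than merely an injection. A final minor point is the degenerate case, since the argument needs $G$ to have an edge; this is already built into the graph formulation of the conjecture. If one prefers, the whole conclusion can be restated through Proposition \ref{prop31}, replacing ``rare'' by ``abundant in $\langle \mathcal{F}^{\scriptscriptstyle X}\rangle$,'' but the maximal-stable-set counting above seems the most transparent route.
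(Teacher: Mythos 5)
Your proposal is correct and follows essentially the same route as the paper: the paper's proof also observes that the two endpoints of an edge cannot both be abundant (your pigeonhole on $d(u)+d(v)\le|\mathcal{M}|$ is just this made explicit) and then transports the resulting rare vertex to the opposite class via the automorphism. Your write-up simply fills in the details that the paper leaves implicit, in particular the verification that $f$ induces a count-preserving bijection on the maximal stable sets.
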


\begin{proof}
    Note that, every bipartite graph contains at least one rare vertex, because having two adjacent vertices both abundant would lead to a contradiction. Without loss of generality, let $a \in X$ be the rare vertex. Since the vertex-edge structure is preserved by $f$, the image $f(a) \in Y$ is also rare.
\end{proof}

Theorem \ref{thm1} implies that for such a symmetry to hold, both bipartite classes must contain the same number of vertices. Now, we examine some specific graph classes to which this theorem applies.
\begin{proposition}
    The grid graph $G = P_m \times P_n$ (where $P_m$ denotes a path graph with $m$ vertices) satisfies the Union Closed Conjecture if either $n$ or $m$ (or both) are even.
\end{proposition}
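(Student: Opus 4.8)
The plan is to apply Theorem \ref{thm1} by exhibiting an automorphism of the grid that interchanges the two bipartition classes. First I would fix coordinates, writing each vertex of $G = P_m \times P_n$ as a pair $(i,j)$ with $1 \le i \le m$ and $1 \le j \le n$, where $(i,j)$ and $(i',j')$ are adjacent precisely when they agree in one coordinate and differ by one in the other. The grid is bipartite with classes $X = \{(i,j) : i+j \text{ even}\}$ and $Y = \{(i,j) : i+j \text{ odd}\}$, since every edge changes exactly one coordinate by one and hence flips the parity of $i+j$.

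Next, assuming without loss of generality that $n$ is even, I would consider the reflection $f(i,j) = (i, n+1-j)$. I would first check that $f$ is an automorphism: it is visibly an involution, hence a bijection, and it preserves adjacency because it fixes the first coordinate and reverses the second, so two vertices differing by one in a single coordinate continue to do so after applying $f$. The case where $m$ is even is handled identically using the reflection $f(i,j) = (m+1-i, j)$.

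The crux of the argument is verifying that $f$ swaps $X$ and $Y$ rather than preserving each class, and this is exactly where the evenness hypothesis enters. For $f(i,j) = (i, n+1-j)$ the relevant coordinate sum is $i + (n+1-j)$; since $n$ is even, $n+1$ is odd, so $i + (n+1-j) \equiv i + j + 1 \pmod{2}$, meaning the parity of the coordinate sum is flipped at every vertex. Hence $f(X) = Y$ and $f(Y) = X$, and Theorem \ref{thm1} yields the conclusion.

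I expect the only genuine obstacle to be this parity bookkeeping. Had $n$ been odd, the same reflection would satisfy $n+1 \equiv 0 \pmod{2}$ and therefore fix each class setwise, failing the hypothesis of Theorem \ref{thm1}; this is precisely why the statement insists that at least one of $m, n$ be even. Since an even dimension forces that factor to have at least two vertices, the grid automatically has an edge, so the nonemptiness requirement of the graph formulation is met without any additional argument.
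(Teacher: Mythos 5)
Your proof is correct and takes essentially the same approach as the paper: the paper sketches exactly this reflection through the grid's central axis as the class-swapping automorphism for Theorem \ref{thm1}, and you have simply supplied the parity bookkeeping that it leaves implicit. (The paper also remarks that the result follows even more directly from the known fact that a neighbour of a degree-2 vertex is rare, since the corner vertices of the grid do not all lie in the same bipartition class.)
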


An automorphism in the grid graph can be observed by rotating around a horizontal or vertical axis through the center. However, this observation is unsurprising, as a neighbor of any degree-2 vertex (corner vertices in this case) is rare\cite{sin}. Notably, these degree-2 vertices are not all located within the same bipartite class, allowing this result to be concluded directly.

Let us now turn to a more intriguing case. A \textit{Cylindrical Grid Graph}, mathematically expressed as $G = C_m \times P_n$, is a variation of the grid graph where vertices are connected in a cylindrical pattern, with $C_m$ representing a cycle graph with $m$ vertices. This graph is bipartite when $m$ is even.

\begin{proposition}
    Bipartite cylindrical grid graphs satisfy the Union Closed Conjecture.
\end{proposition}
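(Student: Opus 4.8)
The plan is to invoke Theorem~\ref{thm1} and exhibit an edge-structure-preserving automorphism $f$ of the bipartite cylindrical grid graph $G = C_m \times P_n$ (with $m$ even) that swaps the two bipartite classes $X$ and $Y$. The natural coordinate system labels each vertex as a pair $(i,j)$ with $i \in \Z_m$ the cyclic coordinate and $j \in \{1,\dots,n\}$ the path coordinate. The standard two-coloring assigns a vertex to class $X$ or $Y$ according to the parity of $i+j$; two vertices are adjacent precisely when they differ by $1$ in exactly one coordinate (with the first coordinate read modulo $m$). So the concrete task is to find a graph automorphism that reverses this parity at every vertex.

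First I would try the rotation-by-one map $f(i,j) = (i+1,\, j)$ on the cyclic coordinate. This is clearly a bijection, and it preserves adjacency: a unit step in either coordinate is sent to a unit step in the same coordinate, and the modular structure of $C_m$ is respected. Crucially, since $f$ changes $i$ by $1$ it flips the parity of $i+j$ at every vertex, so $f(X) = Y$ and $f(Y) = X$. This is exactly the hypothesis of Theorem~\ref{thm1}, and the proposition follows immediately. I would note that this clean parity flip is available precisely because $m$ is even: when $m$ is even the rotation $(i,j)\mapsto(i+1,j)$ is a well-defined automorphism of $C_m$ that is color-reversing, whereas the path factor $P_n$ contributes no such rotational symmetry, which is why the cyclic factor is the one doing the work.

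The main thing to verify carefully, and the only place where something could go wrong, is that $f$ genuinely maps $X$ to $Y$ rather than fixing each class setwise. The parity argument handles this: because $m$ is even, each $C_m$-cycle within a fixed $j$-level is itself properly two-colored by the parity of $i$, so shifting $i$ by one sends every $X$-vertex to a $Y$-vertex and vice versa, uniformly across all levels. I would remark that Theorem~\ref{thm1} then forces both bipartite classes to have equal size, which is consistent here since $|X| = |Y| = mn/2$ when $m$ is even. With the automorphism identified and its color-reversing property checked, no further estimation of maximal stable sets is needed — the symmetry argument does all the work.

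I do not anticipate a genuine obstacle; the only subtlety is conceptual rather than computational, namely recognizing that the cyclic factor supplies a color-reversing symmetry that the grid-graph case $P_m \times P_n$ obtained from reflection rather than rotation. One could alternatively use the reflection $(i,j)\mapsto(-i,j)$ followed by a shift, but the single rotation $f(i,j)=(i+1,j)$ is the simplest witness, so I would present that one.
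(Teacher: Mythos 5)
Your proof is correct and uses exactly the same automorphism as the paper, namely the rotation $f(i,j) = ((i+1) \bmod m,\, j)$, together with the observation that shifting the cyclic coordinate by one flips the parity of $i+j$ and hence swaps the bipartite classes, so that Theorem~\ref{thm1} applies. Your write-up is simply a more detailed version of the paper's argument; no differences in approach to report.
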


\begin{proof}
    Label the vertices of $G = C_m \times P_n$ by $(i, j)$, where $i \in [m] - 1$\footnote{By \( [m] \) we mean \( \{ 1, 2, \dots, m \} \)} and $j \in [n] - 1$. Note that the vertices $(i, j)$ and $(i', j')$ are adjacent if $i = i'$ and $|j - j'| = 1$, or if $j = j'$ and $|i - i'| \equiv 1 \pmod{m}$. The mapping $f(i, j) = ((i + 1) \mod m, j)$ is an automorphism that maps vertices to the opposite bipartite class. By Theorem \ref{thm1}, $G$ satisfies the Union Closed Conjecture.
\end{proof}

An extension of the cylindrical grid graph is the \textit{Torus Grid Graph}, represented mathematically as $G = C_m \times C_n$. This construction yields a fully periodic grid structure where each vertex has exactly four neighbors, and the graph is bipartite when both $m$ and $n$ are even.

\begin{proposition}
    Bipartite torus grid graphs satisfy the Union Closed Conjecture.
\end{proposition}

\begin{proof}
    Represent the vertices of $G = C_m \times C_n$ by $(i, j)$, where $i \in [m] - 1$ and $j \in [n] - 1$. Vertices $(i, j)$ and $(i', j')$ are adjacent if $i = i'$ and $|j - j'| = 1 \pmod{n}$, or if $j = j'$ and $|i - i'| = 1 \pmod{m}$. The automorphism $f(i, j) = ((i + 1) \mod m, j)$ shifts every vertex to the opposite bipartite class, so by Theorem \ref{thm1}, $G$ satisfies UCC.
\end{proof}

Additionally, several bipartite graph classes, such as Möbius ladder, hypercube graphs, and crown graphs, satisfy the Union Closed Conjecture, which can be shown by identifying suitable automorphisms.

\section{Some Interesting Applications}\label{sec3}

In 2021, James Aaronson, David Ellis, and Imre Leader\cite{da} proved that the Union Closed Conjecture holds for union closed families generated by the cyclic translates of any fixed set. Here, we present an alternative proof, leveraging symmetry arguments in the graph formulation, which further simplifies the approach. 

For a cyclic group \( (H, +) \) and a non-empty fixed set \( R \subseteq H \), the translation of \( R \) by \( g \in H \) is defined by \( g + R := \{ g + r : r \in R \} \). Let \( \mathcal{F} \) be the family of all cyclic translates of fixed set \( R \neq \emptyset \), \( R \subseteq \mathbb{Z}_{n} \), where \( n \in \mathbb{N} \), with \( \mathcal{F} \) containing \( k \in \mathbb{N} \) member sets. We fix any set \( A \in \mathcal{F} \) and note that the member sets of \( \mathcal{F} \) are given by \( A, A+1, \dots, A+(k-1) \).

\begin{proposition}\label{pro1}
     The union closed family generated by \( \mathcal{F} \) satisfies UCC.
\end{proposition}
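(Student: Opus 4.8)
The plan is to translate the statement into the graph formulation and exhibit a class-swapping automorphism, so that Theorem~\ref{thm1} and Proposition~\ref{prop31} finish the job. First I would form the incidence graph $G$ of $\mathcal{F}$ with bipartition classes $X = U(\mathcal{F})$ and $Y = \mathcal{F}$. Since the neighbourhood of a set-vertex $S \in Y$ is exactly $S$, we have $\mathcal{F}^{\scriptscriptstyle X} = \mathcal{F}$ and hence $\langle \mathcal{F}^{\scriptscriptstyle X}\rangle = \langle \mathcal{F}\rangle$. By Proposition~\ref{prop31}, producing an abundant element of $\langle \mathcal{F}\rangle$ is the same as producing a rare vertex of $G$ inside $X$, and Theorem~\ref{thm1} yields a rare vertex in \emph{each} class as soon as $G$ admits an automorphism interchanging $X$ and $Y$. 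Note that the obvious symmetry, the cyclic shift $x \mapsto x+1$, preserves both classes and so does not suffice on its own.

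Next I would build the class-swapping map directly from the group structure. Indexing the member sets as $S_g = A + g$, an element $x \in \mathbb{Z}_n$ is adjacent to $S_g$ precisely when $x - g \in A$. I propose the involution $\phi$ given by $\phi(x) = S_{-x} = A - x$ for $x \in X$ and $\phi(S_g) = -g$ for $S_g \in Y$. A direct check shows $\phi$ preserves adjacency: the vertex $-g$ is adjacent to $S_{-x}$ iff $-g - (-x) = x - g \in A$, which is exactly the condition for $x \sim S_g$. Thus $\phi$ is an automorphism with $\phi(X) = Y$ and $\phi(Y) = X$, and Theorem~\ref{thm1} applies to give a rare vertex in $X$.

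The main obstacle is that $\phi$ is a bijection between $X$ and $Y$ only when all $n$ translates are distinct, i.e. when $k = n$; as the remark after Theorem~\ref{thm1} notes, a class-swapping automorphism forces $|X| = |Y|$. When $k < n$ the stabiliser $H = \{g \in \mathbb{Z}_n : g + R = R\}$ is a nontrivial subgroup of order $n/k$, and $|X| = n > k = |Y|$, so I would first reduce to the distinct case. Every translate of $R$ is $H$-invariant, hence a union of $H$-cosets, and therefore so is every member of $\langle \mathcal{F}\rangle$. Passing to the quotient $\mathbb{Z}_n / H \cong \mathbb{Z}_k$ sends $R$ to a set $\bar{R}$ whose $k$ translates are pairwise distinct and induces a bijection $\langle \mathcal{F}\rangle \to \langle \bar{\mathcal{F}}\rangle$ under which $x \in S$ iff $\bar{x} \in \bar{S}$. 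Since every coset has the same size, an element $x$ is abundant in $\langle \mathcal{F}\rangle$ exactly when $\bar{x}$ is abundant in $\langle \bar{\mathcal{F}}\rangle$. Applying the $k = n$ argument in $\mathbb{Z}_k$ then produces the desired abundant element, completing the proof.
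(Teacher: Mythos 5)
Your proposal is correct and rests on the same engine as the paper: pass to the incidence graph, exhibit an automorphism swapping the two classes (essentially $x \mapsto A - x$ on elements and $A + g \mapsto -g$ on sets, which is exactly the paper's map $f$ once its bookkeeping is unwound), and invoke Theorem~\ref{thm1} together with Proposition~\ref{prop31}. The genuine divergence is in how you equalise $|X|$ and $|Y|$ when the translates are not all distinct ($k < n$). The paper pads $Y$ with $n/k$ duplicate copies of each set-vertex, which forces the subscript $c$ and the floor/remainder decomposition $a = k a_d + a_r$ into the definition of $f$ and into the adjacency check. You instead shrink $X$: you observe that every translate, hence every member of $\langle\mathcal{F}\rangle$, is invariant under the stabiliser $H = \{g : g + R = R\}$ of order $n/k$, so the whole family descends to $\mathbb{Z}_n/H \cong \mathbb{Z}_k$ where the translates become pairwise distinct, and abundance is preserved because all $H$-cosets have equal size. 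Both reductions are valid and yield the same clean involution in the distinct-translates case; yours buys a conceptually tidier automorphism at the cost of a short quotient lemma, while the paper's duplication keeps everything inside one graph at the cost of heavier indexing. The one point worth spelling out in your write-up is the claim that the $k$ translates of $\bar{R}$ in the quotient are pairwise distinct: this follows because $\bar{g} + \bar{R} = \bar{R}$ for $H$-invariant $R$ forces $g + R = R$, i.e.\ $g \in H$.
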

\begin{proof}
To establish the structure of our proof, we utilize Theorem \ref{thm1} and Proposition \ref{prop31} as primary tools. Consider the incidence graph \( G(X, Y) \) associated with the family \( \mathcal{F} \), where \( X = U(\mathcal{F}) = \mathbb{Z}_{n} \) and \( Y = \mathcal{F} \). Here, \( |X| = n \) and \( |Y| = k \). Although an automorphism as specified in Theorem \ref{thm1} does not directly exist, we observe that \( k \mid n \), allowing us to introduce \( \frac{n}{k} \) copies of each vertex in \( Y \), thereby matching the cardinalities \( |X| = |Y| \). Adding these copies does not affect the conjecture’s validity, as the union closed family generated by the incidence graph of \( X \) remains unchanged. We denote these copies by \( (A + i)_{\scriptscriptstyle c} \), where \( (A + i)_{\scriptscriptstyle c} \) represents the \( c^{\text{th}} \) copy of \( (A + i) \). Consequently, the modified bipartite class is \( Y = \{ (A + i)_{\scriptscriptstyle c} : i \in [k] - 1, c \in [\frac{n}{k}] \} \).

Define a mapping \( f : V(G) \rightarrow V(G) \) such that for any vertex \( (A + i)_{\scriptscriptstyle c} \in Y \), we set \( f((A + i)_{\scriptscriptstyle c}) = ck - i \). For any vertex \( a \in X \), we define \( f(a) = (A - a_{\scriptscriptstyle r})_{a_{\scriptscriptstyle d} + 1} \), where \( a_{\scriptscriptstyle d} = \left\lfloor \frac{a}{k} \right\rfloor \) and \( a_{\scriptscriptstyle r} = a \mod k \). It is straightforward to verify that \( f \) is a bijection, satisfying \( f(X) = Y \) and \( f(Y) = X \).

To confirm that \( f \) is an automorphism, we need to show that \( a \in (A + i)_{\scriptscriptstyle c} \) whenever \( f((A + i)_{\scriptscriptstyle c}) \in f(a) \), which is \( ck - i \in (A - a_{\scriptscriptstyle r})_{a_{\scriptscriptstyle d} + 1} \). Observe that \( a_{\scriptscriptstyle r} = a - k a_{\scriptscriptstyle d} \), allowing us to rewrite this condition as \( ck - i \in (A - a + k a_{\scriptscriptstyle d}) \). This relation implies that \( a \in (A + i + k a_{\scriptscriptstyle d} - ck) = (A + i) \) and as a result, \(a\in (A + i)_{\scriptscriptstyle c} \). Therefore, we conclude that \( f \) preserves the adjacency structure, confirming that it is indeed an automorphism of \( G \).  Consequently, it follows from Theorem \ref{thm1} that \( G\), and hence \( \left\langle\mathcal{F}\right \rangle \), satisfies the UCC.
\end{proof}

Now let us go to another interesting result when \( \mathcal{F} \) contains exactly \( n \) member sets. To clarify the process, we represent these member sets as tuples\footnote{This notation is for clarity, while all standard set operations remain valid.}. Observe that all these sets have the same cardinality. Let \( (A + i)(j) \) denote the \( j^{\text{th}} \) element of the tuple \( A + i \). By fixing the positions of elements in the tuple \( A \), the positions of elements in the tuple \( A + i \) are preserved according to the relationship \( (A + i)(j) = A(j) + i \). This is the translate of \( A(j) \) in \( (A + i) \).

We fix a set of indices \( I  \subseteq[n]-1\) and a bijective mapping \( q: I \longrightarrow I \) associated with it. The set $I$ is said to be \textit{$r-$suitable index} if it satisfies: 
\begin{enumerate}
\setlength{\itemsep}{1pt}
    \item \( I=r-I \)\label{it1}
    \item \( \forall i \in I, \quad r - i = q(r - q(i)) \)\label{it2}
\end{enumerate}

Now, consider any element of \( A \); without loss of generality, let us take \( A(1) \). Define a mapping \( P_{\scriptscriptstyle I, q} \) on \( \mathcal{F} \) as follows:
\[
P_{\scriptscriptstyle I,q}(A+i) = 
\begin{cases*}
A+i & if $i \notin I$, \\
(A+i)^{\prime} & if $i \in I$.
\end{cases*}
\]
where \( (A + i)^{\prime} \) is defined such that \( (A + i)^{\prime}(b) = (A + i)(b) \) for \( 2 \leq b \leq |A| \), and \( (A + i)^{\prime}(1) = (A + q(i))(1) \). In essence, we select \( l \) member sets of \( \mathcal{F} \), choose an element from one of these sets, and apply a cyclic position shift to the translates of this element across the selected sets in a certain pattern.

\begin{theorem}\label{th}
    Let \( |\mathcal{F}| = n \) and \( I\subseteq [n]-1 \) be an \( r \)-suitable index with respect to the mapping $q$. Then for any \( A \in \mathcal{F} \), the union closed family generated by \( \mathcal{F^{\prime}} = \{ P_{\scriptscriptstyle I, q}(A + i) : i \in [n] - 1 \} \) satisfies the Union Closed Conjecture.
\end{theorem}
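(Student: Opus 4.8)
The plan is to follow the template of Proposition~\ref{pro1}: realize $\langle \mathcal{F}' \rangle$ through the incidence graph $G'(X,Y)$ of $\mathcal{F}'$, where $X = U(\mathcal{F}')$ and $Y = \mathcal{F}'$, and exhibit an automorphism of $G'$ that swaps the two classes; Proposition~\ref{prop31} and Theorem~\ref{thm1} then close the argument. The first thing I would check is that no auxiliary copies are needed this time: since $q$ is a bijection of $I$, the elements that $P_{\scriptscriptstyle I,q}$ removes as ``first coordinates'' are exactly those it reinserts, so $U(\mathcal{F}') = U(\mathcal{F}) = \mathbb{Z}_n$, giving $|X| = n = |\mathcal{F}'| = |Y|$ outright. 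After a harmless translation of the universe I would normalize $A(1) = 0$, so that the sets altered by $P_{\scriptscriptstyle I,q}$ are precisely those indexed by $i \in I$, with $N_{G'}(i) = (A+i)\setminus\{i\}\cup\{q(i)\}$, while $N_{G'}(i) = A+i$ for $i \notin I$.

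The candidate automorphism is the same reflection that drives Proposition~\ref{pro1}, but centred at the suitability parameter: $f$ sends a member set indexed $i$ to the element $r-i$, and an element $a$ to the member set indexed $r-a$ (primed exactly when $r-a \in I$). I would first use condition~\ref{it1}, $I = r-I$, to see that $f$ restricts to bijections between the ``special'' vertices (the indices $i\in I$ on the $Y$-side and, after normalization, the same set $I$ of elements on the $X$-side) and between the non-special vertices; this is what guarantees that the correct adjacency rule, modified or unmodified, is applied on both sides of each edge. With $A(1)=0$ this bijection property is immediate; for general $A(1)$ I would instead centre the reflection at $r + A(1)$, which is the only adjustment the normalization hides.

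The verification of the edge condition $a \sim_{G'} i \iff f(a)\sim_{G'} f(i)$ I would organize by the four special/non-special cases. The mixed and doubly-non-special cases are routine: a non-special element automatically lies outside $\{i,q(i)\}\subseteq I$, so the exceptional branch never fires and both sides collapse to the unmodified relation $a-i\in A$. The real content is the doubly-special case $a,i\in I$, where membership splits as ``$a-i\in A$ and $a\neq i$'' together with the exceptional alternative ``$a=q(i)$''. The bulk part transforms correctly by itself under $f$, so everything reduces to matching the exceptional alternatives, i.e.\ to the equivalence $a=q(i) \iff r-i = q(r-a)$ for $a,i\in I$. Writing $\rho(x)=r-x$ (an involution of $I$ by condition~\ref{it1}), condition~\ref{it2} is exactly the relation $\rho q \rho = q^{-1}$, and this is precisely what turns $a=q(i)$ into $\rho(i)=q(\rho(a))$. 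Hence $f$ is an automorphism of $G'$, and the theorem follows from Theorem~\ref{thm1} and Proposition~\ref{prop31}.

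The step I expect to be the main obstacle is this doubly-special case: recognizing that the two $r$-suitability conditions are not ad hoc but are forced by demanding that the reflection survive the rewiring, condition~\ref{it1} to preserve the special/non-special partition and condition~\ref{it2} (read as the conjugacy $\rho q \rho = q^{-1}$) to match the inserted edges. A secondary point requiring care is the bookkeeping around degeneracies, such as fixed points of $q$ or an inserted element $q(i)$ that already appears in $(A+i)$, where I would check that membership is still described by the same union formula, so that neither the automorphism nor the equality $|X|=|Y|=n$ is affected.
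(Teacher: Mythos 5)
Your proposal is correct and follows essentially the same route as the paper: the same incidence graph, the same reflection $f$ centred at $r+A(1)$, and the same four-case special/non-special analysis, with condition~\ref{it1} killing the exceptional branches in the mixed cases and condition~\ref{it2} matching the inserted elements in the doubly-special case. Your reformulation of condition~\ref{it2} as the conjugacy $\rho q\rho=q^{-1}$ with $\rho(x)=r-x$ is a nice clarifying touch, but it does not change the substance of the argument.
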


\begin{proof}
    Let \( G(X,Y) \) be the incidence graph of \( \mathcal{F^{\prime}} \), where \( X = U(\mathcal{F^{\prime}}) = \mathbb{Z}_{\scriptscriptstyle n} \) and \( Y = \mathcal{F^{\prime}} \). It is evident that \( |X| = |Y| \) as \( |\mathcal{F^{\prime}}| = n \). Define a mapping \( f \colon V(G) \longrightarrow V(G) \) such that for any vertex \( P_{\scriptscriptstyle I, q}(A+i) \) of \( Y \), we set \( f(P_{\scriptscriptstyle I, q}(A+i)) = r + A(1) - i \). For each vertex \( a \in X \), define \( f(a) = P_{\scriptscriptstyle I, q}(A+r+A(1)-a) \). It is straightforward to observe that \( f \) is a bijective mapping, with \( f(X) = Y \) and \( f(Y) = X \).

    To prove that \( f \) is an automorphism, we need to show that \( a \in P_{\scriptscriptstyle I, q}(A+i) \iff r + A(1) - i \in P_{\scriptscriptstyle I, q}(A + r + A(1) - a) \). Let us define \( J = ([n] - 1) \setminus I \) and reduce the problem into four cases.

    \noindent\textbf{Case 1: \( i \in I \) and \( r + A(1) - a \in J \):} 
    Since \( i \in I \), we have \( P_{\scriptscriptstyle I, q}(A+i) = (A+i)^{\prime} \), whereas \( P_{\scriptscriptstyle I, q}(A + r + A(1) - a) = (A + r + A(1) - a) \) because \( r + A(1) - a \notin I\). Note that \( r + A(1) - i \in (A + r + A(1) - a) \iff a \in (A+i) \). This reduces to proving that \( a \in (A+i)^{\prime} \iff a \in (A+i) \). This is trivial except in two cases: when \( a = (A+i)(1) \) or \( a = (A+i)^{\prime}(1) \). These cases occur when \( a - A(1) = i \) or \( a - A(1) = q(i) \), respectively. By assumption, \( r + A(1) - a \in J \), which implies \( a - A(1) \in r - J \). Since \( I \) is an \( r \)-suitable index, and by property \ref{it1}, we have \( I \cap (r - J) = \emptyset \). Thus, both cases are ruled out as impossible.

    \noindent\textbf{Case 2: \( i \in I \) and \( r + A(1) - a \in I \):} 
    The objective here is to demonstrate that \( a \in (A+i)^{\prime} \) whenever \( r + A(1) - i \in (A + r + A(1) - a)^{\prime} \). For every \( b \neq 1 \), we have \( r + A(1) - i = (A + r + A(1) - a)^{\prime}(b) \iff r + A(1) - i = A(b) + r + A(1) - a \iff a = A(b) + i \iff a = (A + i)^{\prime}(b) \). For the first element, we have \( r + A(1) - i = (A + r + A(1) - a)^{\prime}(1) \iff r + A(1) - i = A(1) + q(r + A(1) - a) \iff r - i = q(r + A(1) - a) \). By property \ref{it2}, we obtain \( q(r - q(i)) = q(r + A(1) - a) \iff a = A(1) + q(i) = (A + i)^{\prime}(1) \).

    \noindent\textbf{Case 3: \( i \in J \) and \( r + A(1) - a \in I \):} 
    In this case, we need to prove that \( a \in (A+i) \) whenever \( r + A(1) - i \in (A + r + A(1) - a)^{\prime} \). This is similar to Case 1 and is trivial except when \( a = A(1) + i \) or \( r + A(1) - i = A(1) + q(r + A(1) - a) \). Since \( i \in J \), we have \( r - i \in r - J \), but by definition, \( q(r + A(1) - a) \in I \). Additionally, \( a - A(1) \in r - I= I \) when \( i \in J \), which makes these cases impossible.

    \noindent\textbf{Case 4: \( i \in J \) and \( r + A(1) - a \in J \):} 
    This case is straightforward as we need to prove that \( a \in A+i \iff r + A(1) - i \in (A + r + A(1) - a) \).
\end{proof}

\begin{proposition}
    Let \( R \subseteq \mathbb{Z}_{\scriptscriptstyle n} \) be a set that admits \( n \) distinct translations. Let \( \mathcal{F^{\prime}} \) denote the family derived by cyclically shifting the translations of \( a \in R \) by a fixed integer length across the sets \( A, A+1, A+2, \dots, A+(l-1) \) for an arbitrary integer \( l \in [n] \) and any translate \( A \) of \( R \). Then the family \( \left\langle \mathcal{F^{\prime}} \right\rangle \) satisfies the UCC.
\end{proposition}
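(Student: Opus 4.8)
The plan is to recognize this proposition as a direct specialization of Theorem \ref{th}, so that the entire argument reduces to exhibiting the right data $(I, q, r)$ and checking that $I$ is $r$-suitable. Since $R$ admits $n$ distinct translations, the family $\mathcal{F}$ of all translates of $R$ has exactly $n$ members, so the hypothesis $|\mathcal{F}| = n$ is satisfied and Theorem \ref{th} becomes available once its two conditions are verified.

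First I would fix the shifted element to be $A(1)$ (without loss of generality, by relabeling the tuple positions) and take the index set of the selected sets $A, A+1, \dots, A+(l-1)$ to be $I = \{0, 1, \dots, l-1\}$. A cyclic shift of the translates of $A(1)$ by a fixed length $s$ across these $l$ sets is then encoded by the bijection $q \colon I \to I$ given by $q(i) = (i+s) \bmod l$. Unpacking the definition of $P_{\scriptscriptstyle I,q}$, this map replaces $(A+i)(1) = A(1)+i$ by $A(1)+q(i)$ exactly for $i \in I$, which is precisely the prescribed cyclic reassignment of the translates of $A(1)$ among the chosen sets.

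Next I would set $r = l-1$ and verify the two defining properties of an $r$-suitable index. Property \ref{it1}, namely $I = r - I$, is immediate, since $r - I = \{(l-1)-i : 0 \le i \le l-1\} = \{0,1,\dots,l-1\} = I$; geometrically $l-1$ is the reflection center of the block, which is what forces this choice of $r$. For Property \ref{it2} I would compute directly: writing $j = q(i) = (i+s)\bmod l$, one has $r - j \in I$, and since $j \equiv i+s \pmod l$ it follows that $q(r-j) \equiv (l-1-j)+s \equiv (l-1)-i \pmod l$; as both $q(r-q(i))$ and $r-i$ lie in $\{0,\dots,l-1\}$, this yields $q(r-q(i)) = r-i$ for every $i \in I$.

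Having confirmed both properties, $I$ is an $(l-1)$-suitable index with respect to $q$, and Theorem \ref{th} applies to give that $\langle \mathcal{F}' \rangle$ satisfies the UCC. I do not anticipate a genuine obstacle: the one point deserving care is that Property \ref{it2} must hold for every shift length $s$, and the short congruence above confirms exactly this, pinning down $r = l-1$ as the forced center of symmetry. The remaining check that $q$ is a well-defined bijection of $I$ is automatic for a cyclic shift modulo $l$.
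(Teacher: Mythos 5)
Your proposal is correct and follows exactly the same route as the paper: take $I=\{0,\dots,l-1\}$, $q(i)=(i+s)\bmod l$, $r=l-1$, and invoke Theorem \ref{th}. The only difference is that you explicitly verify the two $r$-suitability conditions (correctly), whereas the paper simply asserts that $I$ is an $(l-1)$-suitable index.
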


\begin{proof}
    Define the index set \( I = \{0, 1, 2, \dots, l-1\} \) and introduce the mapping \( q \) such that \( q(i) = (i + m) \mod l \), where \( m \in [n] - 1 \) represents the fixed integer length of the cyclic shift. Observing that \( I \) forms an \((l-1)\)-suitable index with respect to this shift mapping allows us to apply Theorem \ref{th}. Consequently, the proposition follows as an immediate result.
\end{proof}

\begin{example}
   
Consider the family of translates of the set \( \{1, 2, 4, 7\} \) within the cyclic group \( \mathbb{Z}_7 \):

$\mathcal{F} = \{\{1, 2, 4, 7\}, \{2, 3, 5, 1\}, \{3, 4, 6, 2\}, \{4, 5, 7, 3\}, \{5, 6, 1, 4\}, \{6, 7, 2, 5\}, \{7, 1, 3, 6\}\}$.

Let \( \mathcal{F}^{\prime} \) denote the family obtained by cyclically shifting (by length 1) the translations of \( 1 \) (1, 2, and 3) across the first three sets of \( \mathcal{F} \), specifically sets \( \{1, 2, 4, 7\} \), \( \{2, 3, 5, 1\} \), and \( \{3, 4, 6, 2\} \). The resulting family \( \mathcal{F}^{\prime} \) is then:

$\mathcal{F}^{\prime} = \{\{2, 4, 7\}, \{3, 5, 1\}, \{1, 4, 6, 2\}, \{4, 5, 7, 3\}, \{5, 6, 1, 4\}, \{6, 7, 2, 5\}, \{7, 1, 3, 6\}\}$.

This modified family \( \mathcal{F}^{\prime} \) satisfies UCC according to the above proposition. Notably, \( \mathcal{F}^{\prime} \) includes member sets of varying cardinalities (3 and 4), illustrating that the theorem effectively verifies the UCC even in non-trivial configurations that are typically challenging to confirm with standard techniques.
\end{example}

\section{Acknowledgment}
I would like to thank my advisor, Prof. Rogers Mathew, for introducing the Union Closed Conjecture to me.

\bibliographystyle{abbrv}
\bibliography{references}

\end{document}